\newtheorem{theorem}{Theorem} [section]
\newtheorem{corollary}[theorem]{Corollary}
\newtheorem{conjecture}[theorem]{Conjecture}
\newtheorem{lemma}[theorem]{Lemma}
\newtheorem{example}[theorem]{Example}
\newenvironment {proof} {{\it Proof.}}{\hspace*{\fill}$\Box$\par\vspace{4mm}}
\begin{document}

\bibliographystyle{plain}

\title{A Generalization of Alternating Sign Matrices}

 \author{
 Richard A. Brualdi\\
 Department of Mathematics\\
 University of Wisconsin\\
 Madison, WI 53706\\
 {\tt brualdi@math.wisc.edu}  
 \and
 Hwa Kyung Kim\footnote{Research performed while on leave as an Honorary Fellow at the University of Wisconsin-Madison.}
 \\
 Department of Mathematics Education\\
 Sangmyung University\\
 Seoul 110-743, South Korea\\
 {\tt indices@smu.ac.kr}}

\maketitle

 \begin{abstract}
 In alternating sign matrices the first and last nonzero entry in each row and column is specified to be $+1$.
 Such matrices always exist. We investigate a generalization   by specifying independently the sign of the first and last nonzero entry in each row and column to be either a $+1$ or a  $-1$. We determine necessary and sufficient conditions for such matrices to exist.

\medskip
\noindent {\bf  Key words: alternating sign matrix (ASM)}
 
 \smallskip
\noindent {\bf AMS subject Classification Numbers:  05B20, 05C22, 05C50, 15B35, 15B36.} 
\end{abstract}

\section{Introduction}
An {\it alternating sign matrix}, abbreviated ASM, is an $n\times n$ $(0,+1,-1)$-matrix such that, ignoring $0$s, the $+1$s and $-1$s in each row and column alternate, beginning and ending with a $+1$.  ASMs exist for all $n$, since permutation matrices are ASMs; in fact, the number of  $n\times n$ ASMs, as conjectured by Mills, Robbins, and Rumsey \cite{MRR} (see also \cite{DPR})  and proved independently by Kuperberg \cite{GK} and Zeilberger \cite{DZ}, equals
\[\frac{1!4!7!\cdots (3n-2)!}{n!(n+1)!(n+2)!\cdots (2n-1)!}.\]
 The book \cite{DB} by Bressoud  describes 
the history of ASMs and its connections   to partitions, tilings, and statistical physics.

By definition the first and last nonzero entry in each row and column of an ASM must be a $+1$.
In this note we offer a generalization of ASMs by allowing the first and last nonzero entry in each row and column to be arbitrarily prescribed. In this generality, matrices need not be square. In addition, existence is not automatic,  and we determine necessary and sufficient conditions for existence.

We consider $(m+2)\times (n+2)$ matrices in which the rows are indexed by $0,1,2,\ldots,m,m+1$ and the columns are indexed by $0,1,2,\ldots,n,n+1$.
Let $u=(u_1,u_2,\ldots,u_n), u'=(u_1',u_2',\ldots,u_n'), v=(v_1,v_2,\ldots,v_m)$, and $v'=(v_1',v_2',\ldots,v_m')$ 
be vectors of $\pm1$s.
A $(u,u'|v,v')$-ASM is an $m\times n$ $(0,\pm 1)$-matrix  $A$ (with rows indexed by $1,2,\ldots,m$ and columns indexed by $1,2,\ldots,n$) such that the $+1$s and $-1$s  in  rows  $1,2,\ldots,m$ and columns $1,2,\ldots,n$ of the $(m+2)\times (n+2)$
$(0,\pm 1)$-matrix $A'$ below alternate:
\begin{equation}\label{eq:asm}
A'=\begin{array}{c||ccccc||c}
0&u_1&u_2&\cdots &u_{n-1}&u_n&0\\ \hline\hline
v_1&&&&&&v_1'\\
v_2&&&&&&v_2'\\
\vdots&&&A&&&\vdots\\
v_{m-1}&&&&&&v_{m-1}'\\ 
v_m&&&&&&v_m'\\ \hline\hline
0&u_1'&u_2'&\cdots&u_{n-1}'&u_n'&0\end{array}.\end{equation}
We write $A=A'[1,2,\ldots,m|1,2,\ldots,n]$.

\bigskip\noindent 
{\bf Examples:} 
\begin{itemize}
\item[\rm (i)] Let $u=(+1,-1,-1,+1)$, $u'=(+1,-1,+1,-1)$, $v=(+1,-1,+1,-1)$, and $v'=(-1,+1,+1,-1)$. Then, partitioning rows and columns and omitting $0$s and  using only the signs of the nonzero entries. the matrix
\[A'=\begin{array}{c||c|c|c|c||c}
&+&-&-&+&\\ \hline\hline
+&-&&+&&-\\ \hline
-&&+&&-&+\\ \hline
+&&&-&&+\\ \hline
-&&&&+&-\\ \hline\hline
&+&-&+&-&\end{array}\]
gives the $(u,u'|v,v')$-ASM
\[A=\left[\begin{array}{c|c|c|c}
-&&+&\\ \hline
&+&&-\\ \hline
&&-&\\ \hline
&&&+\end{array}\right].\]
\item[\rm (ii)] Let $u=(+1,-1,-1,-1)$, $u'=(+1,-1,-1,+1)$, $v=(+1,-1,+1)$, and $v'=(-1,-1,-1)$, then the matrix
 \[A'=\begin{array}{c||c|c|c|c||c}
&+&-&-&-&\\ \hline\hline
+&&&&&-\\ \hline
-&&+&&&-\\ \hline
+&-&&+&&-\\  \hline\hline
&+&-&-&+&\end{array}\]
gives the $(u,u'|v,v')$-ASM
\[A=\left[\begin{array}{c|c|c|c}
\phantom{+}&&&\phantom{+}\\ \hline
&+&&\\ \hline
-&&+&\end{array}\right].\]
\item[\rm (iii)] If $u_iu_i'=-1$ for all $i=1,2,\ldots,n$ and $v_jv_j'=-1$ for all $j=1,2,\ldots,m$, then the $m\times n$ zero matrix $O_{m,n}$ is a 
$(u,u'|v,v')$-ASM.
\item[\rm (iv)] Let $u=(+1,+1)$, $u'=(-1,+1)$, $v=(+1,+1)$, $v'=(+1,-1)$. Then a $(u,u'|v,v')$-ASM does not exist since it is impossible to complete
the matrix
\[\begin{array}{c||c|c||c}
&+&+&\\ \hline\hline
+&&&+\\ \hline
-&&&-\\ \hline\hline
&-&+&\end{array}\]
to have alternating signs.
\item[\rm (v)] If $u=u'=v=v'=(-1,-1,\ldots,-1)$, then a  $(u,u'|v,v')$-ASM is an ordinary ASM. Note that if $u=u'$ and $v=v'$ are, respectively,  $n$-vectors and $m$-vectors of all $-1$s, then if $m\ne n$,  a  $(u,u'|v,v')$-ASM does not exist.
\item[\rm (vi)] If $A$ is any $(0,\pm 1)$-matrix in which the $+1$s and $-1$s alternate in each row and column, then we may choose $u,u',v,v'$ so that $A$ is a  $(u,u'|v,v')$-ASM: for each $i$, choose $u_i$ and $u_i'$ to have the opposite sign of the, respectively, first and last nonzero entry in column $i$ of $A$ and, for each $j$, choose $v_j$ and $v_j'$ to have the opposite sign of the, respectively, first and last nonzero entry in row $j$ of $A$. If row $i$ of $A$ has only zeros, then we may choose $u_i$ equal to $+1$ or $-1$, and then choose $u_i'=-u_i$. A similar choice is made when a column contains only zeros.  
\end{itemize}

For a vector $w=(w_1,w_2,\ldots,w_p)$, let $w^{\leftarrow}=(w_p,\ldots,w_2,w_1)$ be the vector obtained from $w$ by reversing its coordinates.
If $A$ is a matrix, then by 
$A_{\uparrow}$ we mean the matrix obtained from $A$ by inverting the order of its rows, and by  $A^{\leftarrow}$ we mean the matrix obtained from $A$ by inverting the order of its columns. By $A_{\uparrow}^{\leftarrow}$ we mean the matrix obtained from $A$ by inverting the order of both its rows and columns.
Let $A$ be a $(u,u'|v,v')$-ASM. Then the matrix $A_{\uparrow}$ is a $(u',u|v^{\leftarrow},v'^{\leftarrow})$-ASM, and the matrix $A^{\leftarrow}$ is a $(u^{\leftarrow},u'^{\leftarrow}|v',v)$-ASM. 
The matrix $A_{\uparrow}^{\leftarrow}$  is a  $(u'^{\leftarrow},u^{\leftarrow}|v'^{\leftarrow},v^{\leftarrow})$-ASM. Also the transpose matrix 
$A^T$ is a $(v,v'|u,u')$-ASM. 
  
In the next section we obtain necessary and sufficient conditions for the existence of $(u,u'|v,v')$-ASMs.

\section{Main Theorem}

We first obtain some necessary conditions for a $(u,u'|v,v')$-ASM to exist. For this we need to define certain quantities.

Let $u=(u_1,u_2,\ldots,u_n), u'=(u_1',u_2',\ldots,u_n'), v=(v_1,v_2,\ldots,v_m)$, and $v'=(v_1',v_2',\ldots,v_m')$ 
be vectors of $\pm1$s. We define $m$-vectors $r^+(v,v')=(r^+_1,r^+_2,\ldots,r^+_m)$ and  $r^-(v,v')=(r^-_1,r^-_2,\ldots,r^-_m)$,  and $n$-vectors  $c^+(u,u')=(c^+_1,c^+_2,\ldots,c^+_n)$ and $c^-(u,u')=(c^-_1,c^-_2,\ldots,c^-_n)$ as follows:
\begin{enumerate}
\item[]  $r^+_k=r^+_k(v,v')$ is the number of $i\le k$ such that $v_i=v_i'=+1$. 
\item[]  $r^-_k=r^-_k(v,v')$ is the number of $i\le k$ such that $v_i=v_i'=-1$. 
\item[]  $c^+_l=c^+_l(u,u')$ is the number of $j\le l$ such that $u_j=u_j'=+1$. 
\item[]  $c^-_l=c^-_l(u,u')$ is the number of $j\le l$ such that $u_j=u_j'=-1$. 
\end{enumerate}
In particular, we have:
\begin{enumerate}
\item[] $r^+_m=r^+_m(v,v')$ is the total number of $i$ such that $v_i=v_i'=+1$.
\item[]  $r^-_m=r^-_m(v,v')$ is the total number of $i$ such that $v_i=v_i'=-1$.
\item[]  $c^+_n=c^+_n(u,u')$ is the total number of $j$ such that $u_j=u_j'=+1$.
 \item[]  $c^-_n=c^-_n(u,u')$ is the total number of $j$ such that $u_j=u_j'=-1$.
\end{enumerate}
 
Now let  $A=[a_{ij}]$ be a $(u,u'|v,v')$-ASM as determined by (\ref{eq:asm}).  For each $j$ such that $u_j=u_j'=+1$,   column $j$ of $A$ contains  one more $-1$ than $+1$, and for each $j$ such that $u_j=u_j'=-1$, column $j$ of $A$ contains one more
$+1$ than $-1$. If $u_j$ and $u_j'$ are of opposite sign, then column $j$ contains an equal number of $+1$s and $-1$s.
Hence the sum of the entries of $A$ equals $c^-_n(u,u')-c^+_n(u,u')$. Arguing by rows rather than columns, we see that the sum of the entries of $A$ also equals $r^-_m(v,v')-r^+_m(v,v')$. Hence 
\begin{equation}\label{eq:nec1}
r^-_m(v,v')-r^+_m(v,v')=c^-_n(u,u')-c^+_n(u,u').\end{equation}
We set
\begin{enumerate}
\item[] $u^+=|\{j: u_j=+1\}|$,
\item[] $u^-=|\{j: u_j=-1\}|$,
\item[] $v^+=|\{i: v_i=+1\}|$,
\item[] $v^-=|\{i: v_i=-1\}|$.
\end{enumerate}
Now consider the leading $k\times n$ submatrix $A[1,2,\ldots,k|1,2,\ldots,n]$ of the 
$(u,u'|v,v')$-ASM $A=[a_{ij}]$. Then
\[
r^-_k-r^+_k=r_k^-(v,v') - r_k^+(v,v')= \sum_{i=1}^k\sum_{j=1}^n a_{ij}=\sum_{j=1}^n\left(\sum_{i=1}^ka_{ij}\right).\]
We have
\[\sum_{i=1}^ka_{ij}=\left\{\begin{array}{ll}
0\mbox{ or }-1&\mbox{ if $u_j=+1$},\\
0\mbox{ or } +1 &\mbox{ if $u_j=-1$.}\end{array}\right.\]
Therefore, the maximum 
$\sum_{j=1}^n\left(\sum_{i=1}^ka_{ij}\right)$ can be is $u^-$ and the minimum it can be is
$-u^+$. Hence we have
\begin{equation}\label{eq:nec2}
-u^+\le r_k^-(v,v') - r_k^+(v,v')\le u^-\quad (k=1,2,\ldots,m).\end{equation}
In a similar way, we obtain
\begin{equation}\label{eq:nec3}
-v^+\le c_l^-(u,u') - c_l^+(u,u')\le v^-\quad (l=1,2,\ldots,n).\end{equation}
In summary, (\ref{eq:nec1}), (\ref{eq:nec2}), and (\ref{eq:nec3}) are necessary conditions for the existence of a $(u,u'|v,v')$-ASM. Similar conditions must hold by arguing with $A_{\uparrow}$ and $A^{\leftarrow}$ but it turns out that they are not needed (that is, are consequences of 
(\ref{eq:nec1}), (\ref{eq:nec2}), and (\ref{eq:nec3})) .

\begin{theorem}\label{th:one}
Let $u=(u_1,u_2,\ldots,u_n), u'=(u_1',u_2',\ldots,u_n'), v=(v_1,v_2,\ldots,v_m)$, and $v'=(v_1',v_2',\ldots,v_m')$ 
be vectors of $\pm1$s. Then a $(u,u'|v,v')$-ASM exists if and only if 
 $(\ref{eq:nec1})$, $(\ref{eq:nec2})$, and $(\ref{eq:nec3})$  hold.
\end{theorem}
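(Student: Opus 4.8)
The necessity of $(\ref{eq:nec1})$, $(\ref{eq:nec2})$, and $(\ref{eq:nec3})$ is already established, so the plan is to prove sufficiency by constructing a $(u,u'|v,v')$-ASM whenever the three conditions hold. I would argue by induction on $m$, peeling off the first interior row. It is convenient first to recast column alternation in the language of the necessity proof: for a column $j$ the partial sums $\sum_{i\le k}a_{ij}$ are confined to $\{0,-1\}$ when $u_j=+1$ and to $\{0,+1\}$ when $u_j=-1$, the terminal value being dictated by $u_j'$, and symmetrically for rows via $v_j,v_j'$. In the base case $m=0$, condition $(\ref{eq:nec3})$ (with $v^+=v^-=0$) forces $c_l^-(u,u')-c_l^+(u,u')=0$ for all $l$, hence every column is neutral ($u_j'=-u_j$), and the empty matrix is the desired ASM.

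For the inductive step I would choose the first interior row $x=(a_{11},\ldots,a_{1n})$ and then delete it, obtaining an $(m-1)\times n$ instance with the same $u'$, with $v,v'$ truncated to $(v_2,\ldots,v_m),(v_2',\ldots,v_m')$, and with the top border replaced by $\tilde u$, where $\tilde u_j=u_j$ if $a_{1j}=0$ and $\tilde u_j=-u_j$ if $a_{1j}\ne 0$ (exactly the sign that keeps column $j$ alternating after its first entry). Each $a_{1j}$ is constrained both by its column (so $a_{1j}\in\{0,-1\}$ if $u_j=+1$ and $a_{1j}\in\{0,+1\}$ if $u_j=-1$) and by row alternation with $v_1,v_1'$. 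Negating all signs if necessary (which sends a $(u,u'|v,v')$-ASM to a $(-u,-u'|-v,-v')$-ASM), I may assume $v_1=+1$, so that the partial sums $\sigma_l:=\sum_{j\le l}a_{1j}$ form a path in $\{0,-1\}$ with terminal value $\sigma_n=\epsilon_1$, where $\epsilon_1\in\{0,-1\}$ is the forced first-row sum determined by $v_1,v_1'$.

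The key point, which I would verify by direct computation, is that conditions $(\ref{eq:nec1})$ and $(\ref{eq:nec2})$ are inherited by the reduced instance for \emph{every} admissible first row: one finds $\tilde u^+=u^++\epsilon_1$ and $\tilde u^-=u^--\epsilon_1$, and these shifts cancel against the deletion of row $1$, so the reduced versions of $(\ref{eq:nec1})$ and $(\ref{eq:nec2})$ reduce term-by-term to the original ones. Thus the only genuine restriction on $x$ comes from $(\ref{eq:nec3})$: since a nonzero first-row entry in column $j$ flips $\tilde u_j$ relative to $u_j$, one computes $\tilde c_l^-(\tilde u,u')-\tilde c_l^+(\tilde u,u')=\gamma_l-\sigma_l$, where $\gamma_l:=c_l^-(u,u')-c_l^+(u,u')$. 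Hence the reduced $(\ref{eq:nec3})$ reads $-(v^+-1)\le \gamma_l-\sigma_l\le v^-$ for all $l$, which forces $\sigma_l=0$ at every column where $\gamma_l$ attains its permitted maximum $v^-$ and $\sigma_l=-1$ at every column where $\gamma_l$ attains its permitted minimum $-v^+$, while leaving $\sigma_l$ free in between.

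The heart of the argument, and the step I expect to be the main obstacle, is to show that a path $\sigma$ meeting these forced values exists and is realizable by legitimate entries $a_{1j}$. The realizability constraints are rigid: $\sigma$ may step from $0$ down to $-1$ only at a column with $u_j=+1$, and back up from $-1$ to $0$ only at a column with $u_j=-1$. The saving observation is that $\gamma_l$ decreases precisely at the columns with $u_j=u_j'=+1$ (a subset of the $u_j=+1$ columns) and increases precisely at the columns with $u_j=u_j'=-1$; hence between a column where $\gamma$ is maximal (forcing $\sigma=0$) and a later column where $\gamma$ is minimal (forcing $\sigma=-1$) the net decrease of $\gamma$ supplies a column of the correct sign at which to lower $\sigma$, and symmetrically for raising it. Turning this into a careful left-to-right construction that keeps $\sigma$ inside $\{0,-1\}$, respects every column range, lands on $\sigma_n=\epsilon_1$, and honors the forced values at the extremes, using the original inequalities $(\ref{eq:nec2})$ and $(\ref{eq:nec3})$ to rule out conflicts, is the crux; once it is done, the inductive hypothesis applied to the reduced instance finishes the proof. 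As an alternative viewpoint worth keeping in mind, one could encode the whole problem as the existence of an integer corner-sum array $N_{kl}=\sum_{i\le k,\,j\le l}a_{ij}$ with zero top and left borders, prescribed bottom and right borders, and horizontal and vertical increments confined to the two-element ranges dictated by $u$ and $v$; here $(\ref{eq:nec1})$ is the compatibility of the two borders at the far corner while $(\ref{eq:nec2})$ and $(\ref{eq:nec3})$ assert that those borders are within reach, and sufficiency becomes a feasible-potential problem amenable to Hoffman-type circulation conditions.
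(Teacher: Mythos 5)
Your reduction is genuinely different from the paper's and, as far as it goes, correct: you induct on $m$ alone, keep every column, and absorb the chosen first row into a modified top border $\tilde u$, whereas the paper inducts on $m+n$, selects a row $q$ with $v_q=v_q'$ of a definite sign (treating $r_m^+=r_m^-=0$ as a separate third case), places nonzeros only in columns with $u_j=u_j'$ along a longest, first-available alternating sequence, and then deletes those columns outright. Your bookkeeping checks out: the automatic inheritance of $(\ref{eq:nec1})$ and $(\ref{eq:nec2})$ for any admissible first row, and the identity $\tilde c_l^-(\tilde u,u')-\tilde c_l^+(\tilde u,u')=\gamma_l-\sigma_l$, which converts the inherited $(\ref{eq:nec3})$ into the forcing rules $\sigma_l=0$ where $\gamma_l=v^-$ and $\sigma_l=-1$ where $\gamma_l=-v^+$, are all verifiable.

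But the proposal is not a complete proof, and you say so yourself: the existence of a $\{0,-1\}$-valued path $\sigma$ that honors the forced values, steps down only at columns with $u_j=+1$, steps up only at columns with $u_j=-1$, and lands on $\sigma_n=\epsilon_1$ is the entire combinatorial content of the theorem, and your ``saving observation'' covers only one of the possible deadlocks. It handles a forced $0$ at $l$ followed by a forced $-1$ at $l'>l$ (the drop of $\gamma$ by $v^-+v^+=m\ge 1$ supplies an intermediate column with $u_j=u_j'=+1$), but it does not address the boundary obstructions: for instance, if $\epsilon_1=-1$ and the last index with $\gamma_l=v^-$ is followed only by columns with $u_j=-1$, the path is stranded at $0$ and cannot reach $\sigma_n=-1$. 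Excluding this requires a separate argument (here: $\gamma$ is nondecreasing on that tail, so $\gamma_n=v^-$, whence $r_m^--r_m^+=v^-$ and $r_m^+=0$, contradicting $v_1=v_1'=+1$), and symmetric checks are needed for the first forced value and for $\epsilon_1=0$ with a trailing forced $-1$. These verifications are precisely where $(\ref{eq:nec1})$ and the inequalities $r_m^{\pm}\le v^{\pm}$ must be invoked, and they correspond to the case analysis (a)--(d3) that occupies most of the paper's proof. Until that left-to-right construction is written out and each such conflict is ruled out, what you have is a sound plan with its central lemma unproved.
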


\begin{proof}
We have already established that a $(u,u'|v,v')$-ASM implies that  (\ref{eq:nec1}), (\ref{eq:nec2}), and (\ref{eq:nec3}) hold. Now assume that  (\ref{eq:nec1}), (\ref{eq:nec2}), and (\ref{eq:nec3}) are satisfied. We prove by induction on $m+n$ that there exists a $(u,u'|v,v')$-ASM $A=[a_{ij}]$. If $m=n=1$, then the conclusion holds. Assume that $m+n\ge 3$.

Without loss of generality we assume that $r^+_m(v,v')\le c^+_n(u,u')$. By (\ref{eq:nec1}), this implies  that $r^-_m(v,v')\le c^-_n(u,u')$.

\smallskip\noindent
{\it Case $1$}: $r^+_m(v,v')\ge 1$.
Let $q$ be an index with $v_q=v_q'=+1$. We determine the {\it longest} sequence $1\le s_0<t_1<s_1<t_2<\cdots <t_p<s_p$, with $0\le p\le c^+_n-r^+_m$, such that
$u_{s_i}=u'_{s_i}=+1$ for $i=0,1,\ldots,p$ and $u_{t_j}=u'_{t_j}=-1$ for $j=1,2,\ldots,p$; here the integers $s_0,t_1,s_1,t_2,\ldots,t_p,s_p$ are chosen, in turn,  to be the first available satisfying the specified conditions (later we refer to this as the {\it first available property}).
We then set $a_{q,s_i}=-1$ for $i=0,1,\ldots,p$ and $a_{q,t_j}=+1$ for $j=1,2,\ldots,p$. 
All other entries in row $q$ are set equal to 0. We delete row $q$ and columns $s_0,t_1,s_1,t_2,\ldots,t_p, s_p$. We also delete $u_{s_0},u_{t_1},u_{s_1},u_{t_2},\ldots,u_{t_p},u_{s_p}$ from $u$, 
delete $u'_{s_0},u'_{t_1},u'_{s_1},u'_{t_2},\ldots,u'_{t_p},u'_{s_p}$ from $u'$, delete $v_q$ from $v$, and delete  $v'_q$ from $v'$ to get, respectively, vectors $\bar{u}$ and $\bar{u}'$ of length $\bar{n}=n-(2p+1)$, and vectors $\bar{v}$ and $\bar{v}'$ of length $\bar{m}=m-1$.  For rows, columns, and vector entries, we use the same indices that they had before the deletions.
We now show that $\bar{u},\bar{u}', \bar{v}$, and $\bar{v}'$  satisfy the corresponding conditions  (\ref{eq:nec1}), (\ref{eq:nec2}), and (\ref{eq:nec3}), where we use the corresponding notations: 
$\bar{r}^-_{\bar{m}}, \bar{r}^+_{\bar{m}}, \bar{c}^-_{\bar{n}}$, and $\bar{c}^+_{\bar{n}}$.

We have $\bar{u}^+=u^+-p-1$, $\bar{u}^-=u^- - p$, $\bar{v}^+=v^+-1$, and $\bar{v}^-=v^-$.  We also have
$\bar{r}^+_{\bar{m}}=r^+_m-1$, $\bar{r}^-_{\bar{m}}=r^-_m$, $\bar{c}^+_{\bar{n}}=c^+_n-p-1$, and $\bar{c}^-_{\bar{n}}=c^-_n-p$. Since $p\le c^+_n-r^+_m=c_n^--r_m^-$, we  have
\[\bar{c}^+_{\bar{n}}=c^+_n-p-1\ge r_m^+-1= \bar{r}^+_{\bar m}\mbox{ and } \bar{c}^-_{\bar{n}}\ge \bar{r}^-_{\bar{m}}.\]
We now check that the corresponding conditions (\ref{eq:nec1}), (\ref{eq:nec2}), and (\ref{eq:nec3}) hold for our new parameters. Using the above values, we see that
\[\bar{r}^-_{\bar{m}}-\bar{r}^+_{\bar{m}}=\bar{c}^-_{\bar{n}}-\bar{c}^+_{\bar{n}},\]
verifying the corresponding condition (\ref{eq:nec1}). For the corresponding condition (\ref{eq:nec2}), we calculate that for each $k$ we have 
\[\bar{r}^-_k-\bar{r}^+_k\le \bar{r}^-_k\le \bar{r}^-_m\le  \bar{c}^-_{\bar{n}} \le \bar{u}^-, \]
and 
\[\bar{r}^+_k-\bar{r}^-_k\le \bar{r}^+_k\le \bar{r}^+_m< \bar{c}^+_{\bar{n}} \le \bar{u}^+, \]
which together are equivalent to the corresponding condition (\ref{eq:nec2}).

We now turn to verifying the corresponding condition (\ref{eq:nec3}). For this we need to examine several possibilities according to the value of $k$. We need to show that for each $l$, 
\[\bar{c}^-_l-\bar{c}^+_l\le \bar{v}^- \mbox{ and }
\bar{c}^+_l-\bar{c}^-_l\le \bar{v}^+.\]

(a) $l<s_0$: By the first available property, we have that $\bar{c}^+_l=0$, and hence
\[\bar{c}^-_l-\bar{c}^+_l={c}^-_l-{c}^+_l\le v^-=\bar{v}^-,\]
and
\[\bar{c}^+_l-\bar{c}^-_l=-\bar{c}^-_l \le 0\le\bar{v}^+.\]

\smallskip
(b) $s_j<l<t_{j+1}$ for some $j$ with $0\le j<p$: Again using the first available property, we calculate that
\[\bar{c}^-_l-\bar{c}^+_l\le \bar{c}^-_{s_j-1} -\bar{c}^+_{s_j-1}= {c}^-_{s_j-1} -{c}^+_{s_j-1}
\le v^-=\bar{v}^-.\]
and 
\[\bar{c}^+_l-\bar{c}^-_l=c^+_l-c^-_l-1\le v^+-1=\bar{v}^+.\]

\smallskip
(c) $t_j<l<s_j$  for some $j$ with $1\le j\le p$: Using the first available property once more, we obtain that
\[\bar{c}^-_l-\bar{c}^+_l= {c}^-_{l}-{c}^+_{l}= v^-=\bar{v}^-,\]
and
\[\bar{c}^+_l-\bar{c}^-_l\le \bar{c}^+_{t_j-1}-\bar{c}^-_{t_j-1}=
 {c}^+_{t_j-1}-{c}^-_{t_j-1}-1
\le v^+-1=\bar{v}^+.\]

\smallskip
(d) $s_p<l$: We first calculate that
\[\bar{c}^+_l-\bar{c}^-_l={c}^+_l-{c}^-_l-1\le v^+-1=\bar{v}^+.\]
For $\bar{c}^-_l-\bar{c}^+_l$, we consider three subcases:
\smallskip

(d1) If there does not exist a column $i$ with $s_p<i<l$ such that $u_i=u_i'=-1$, then the first available property gives
\[\bar{c}^-_l-\bar{c}^+_l\le \bar{c}^-_{s_p-1}-\bar{c}^+_{s_p-1}
={c}^-_{s_p-1}-{c}^+_{s_p-1}\le v^-=\bar{v}^-.\]

\smallskip
(d2) If there exists a column $i$ with $s_p<i<l$ such that $u_i=u_i'=-1$, and there exists a column $j$ with $j>i$ such that $u_j=u_j'=+1$, then
by the longest property  of the sequence 
$s_0,t_1,s_1,t_2,\cdots ,t_p,s_p$  with $0\le p\le c^+_n-r^+_m$, we have
$p=c_n^+-r_m^-$. Thus
\[\bar{c}^-_l-\bar{c}^+_l\le \bar{c}^-_l\le c^-_n-(c^+_n-r^+_m)=(c^-_n-c^+_n)+r^+_m=
(r^-_m-r^+_m)+r^+_m=r^-_m=\bar{v}^-.\]

\smallskip
(d3) If there exists a column $i$ with $s_p<i<l$ such that $u_i=u_i'=-1$, and there does not exist a column $j$ with $j>i$ such that $u_j=u_j'=+1$
(so $c^+_l=c^+_n$), then, using also our assumption in this case that $r^+_m\ge 1$, we have
\[\bar{c}^-_l-\bar{c}^+_l={c}^-_l-{c}^+_l+1=
c^-_l-c^+_n+1\le c_n^{-}-c^+_n+1=r^-_m-r^+_m+1\le r^-_m =\bar{v}^-.\]

\smallskip\noindent
{\it Case $2$}: $r^-_m(v,v')\ge 1$. 
This case is very similar to Case 1 and we omit the details. We choose an index
$q$  with $v_q=v_q'=-1$. We determine the longest sequence $1\le s_0<t_1<s_1<t_2<\cdots <t_p<s_p$, with $0\le p\le c^+_n-r^+_m$, such that
$u_{s_i}=u'_{s_i}=-1$ for $i=0,1,\ldots,p$ and $u_{t_j}=u'_{t_j}=+1$ for $j=1,2,\ldots,p$; here the integers $s_0,t_1,s_1,t_2,\ldots,t_p,s_p$ are chosen, in turn,  to be the first available satisfying the specified conditions.
We then set $a_{q,s_i}=+1$ for $i=0,1,\ldots,p$ and $a_{q,t_j}=-1$ for $j=1,2,\ldots,p$. 
All other entries in row $q$ are set equal to 0. We delete row $q$ and columns $s_0,t_1,s_1,t_2,\ldots,t_p, s_p$, and proceed as in Case 1.

\smallskip\noindent
{\it Case $3$}: $r^+_m(v,v')=r_m^-(v,v')=0$. 

The verification in this case is similar to that in Case 1 but there are important differences so that we include the details.
 By (\ref{eq:nec1}), we have $c^+_n (u,u')=c_n ^- (u,u')$.  
Without loss of generality, we may assume that $u_1 = u'_1 =+1$. There is $q$ such that $v_q = +1$ and $v'_q=-1$. 
In this case  we determine the {\em longest } sequence $ 1 = s_1 < t_1 < \cdots < s_p < t_p$, such that $u_{s_i}=u'_{s_i}=+1$ for $i=1, \ldots, p$ and $u_{t_j}=u'_{t_j}=-1$ for $j=1, \ldots, p$; here the integers $s_1 , t_1 , \ldots, s_p, t_p$ are chosen, in turn, to be the first available satisfying the specified conditions.
We set $a_{q,s_i} = -1$ for $i=1, \ldots, p$ and $a_{q, t_j} = +1$ for $j=1, \ldots, p$. All other entries in row $q$ are set equal to 0. We delete row $q$ and columns $s_1, t_1, \ldots, s_p, t_p$. 
We also delete $u_{s_1}, u_{t_1}, \ldots, u_{s_p}, u_{t_p}$ from $u$, delete $u'_{s_1}, u'_{t_1}, \ldots, u'_{s_p}, u'_{t_p}$  from $u'$, delete $v_q$ from $v$, and delete $v'_q$ from $v'$ to get, respectively, vectors ${\bar u}$ and ${\bar u'}$ of length ${\bar n}=n-2p$, and vectors ${\bar v}$ and ${\bar v'}$ of length ${\bar m}=m-1$. 
We adopt the same conventions as in Case 1.
We now show that ${\bar u}, {\bar u'}, {\bar v}$, and ${\bar v'}$ satisfy the corresponding conditions (2), (3), and (4). 

\medskip

We  have ${\bar u^+}=u^+ -p$, ${\bar u^-} = u^- -p$, ${\bar v^+ } = v^+-1$, and ${\bar v^- } =v^-$.  We also have ${\bar r}_{\bar m}^+ = {\bar r}_{\bar m}^-=0$, ${\bar c}_{\bar n}^+=c_n ^+ -p$, and ${\bar c}_{\bar n}^-=c_n ^- -p$. We now check that the corresponding conditions (2), (3), and (4) hold for our new parameters. First, we have 
\[
{\bar r}_{\bar m}^- - {\bar r}_{\bar m}^+ = {\bar c}_{\bar n}^- - {\bar c}_{\bar n}^+,
\]
verifying the corresponding condition (\ref{eq:nec1}). Since ${\bar r}_{\bar m}^+ = {\bar r}_{\bar m}^-=0$, we have ${\bar r}_k^+ = {\bar r}_k^-=0$ for each $k$, and hence the corresponding  condition (\ref{eq:nec2}) holds. 

To verify the corresponding condition (4),  we need to show that for each $l$, 
\[
{\bar c_l }^- - {\bar c_l }^+ \le {\bar v}^-
\mbox{ and }
{\bar c_l }^+ - {\bar c_l }^- \le {\bar v}^+
.\]
To do this, we  consider several possibilities according to the value of $l$.

\smallskip
(a) $ s_j < l < t_j$ for some $j$ with $1 \le j \le p$: By the first available property, we calculate that
\[
{\bar c_l }^- -  {\bar c_l }^+ \le {\bar c_{s_j-1} }^- -  {\bar c_{s_j-1} }^+ = c_{s_j-1} ^- -  c_{s_j-1} ^+ \le v^- = {\bar v}^-,
\]
and
\[
{\bar c_l }^+ -  {\bar c_l }^- = c_{l} ^+ -  c_{l} ^- -1\le v^+ -1 = {\bar v}^+.
\]

(b) $ t_j < l < s_{j+1}$ for some $j$ with $1 \le j \le p-1$: Again the first available property implies that 
\[
{\bar c_l }^- -  {\bar c_l }^+ = c_{l} ^- -  c_{l} ^+ \le v^-= {\bar v}^-,
\]
and
\[
{\bar c_l }^+ -  {\bar c_l }^- \le {\bar c_{t_j-1} }^+ -  {\bar c_{t_j-1} }^- = c_{t_j-1} ^+ -  c_{t_j-1} ^- -1 \le v^+-1 = {\bar v}^+.
\]
\medskip

(c) $t_p < l$: We first calculate that 
\[
{\bar c_l }^- -  {\bar c_l }^+ = c_{l} ^- -  c_{l} ^+ \le v^- = {\bar v}^-.
\]
For ${\bar c_l }^+ -  {\bar c_l }^-$, we consider two subcases:
\medskip

(c1) If there does not exist a column $i$ with $t_p < i < l$ such that $u_i = u'_i=+1$, then 
\[
{\bar c_l }^+ -  {\bar c_l }^-  \le {\bar c_{t_p-1} }^+ -  {\bar c_{t_p-1} }^- = c_{t_p-1} ^+ -  c_{t_p-1} ^- -1 \le v^+-1 = {\bar v}^+.
\]

\medskip

(c2) If there exists a column $i$ with $t_p < i < l$ such that $u_i = u'_i=+1$, then by the longest property of the sequence $s_1, t_1, \ldots, s_p, t_p$, we have there does not exist a column $j$ with $j>i$ such that $u_j = u'_j =-1$ (so $c_l ^- = c_n ^-$). Thus
\[
{\bar c_l }^+ -  {\bar c_l }^-  = c_l^+ -  c_l^- = c_{l} ^+ -  c_{n} ^-  \le c_n^+ - c_n ^-=0 \le  {\bar v}^+.
\]

All cases having been argued the proof of the theorem is now complete.
\end{proof}


The proof Theorem \ref{th:one} gives an algorithm to construct a $(u,u'|v,v')$-ASM when the conditions (\ref{eq:nec1}),  (\ref{eq:nec2}), and  (\ref{eq:nec3})  are satisfied. We give some examples illustrating the proof.

\smallskip\noindent
{\bf Example 1.} Let $m=n$ and $u=u'=v=v'=(-1,-1,\ldots,-1)$. Then a $(u,u'|v,v')$-ASM is a classical ASM. In applying the algorithm, we have $n$ steps and we get to  choose the order in which we select the rows.
If we select them in the order $q_1,q_2,\ldots,q_n$, then the algorithm gives the permutation matrix corresponding to the permutation $(q_1,q_2,\ldots,q_n)$ of $\{1,2,\ldots,n\}$.

\smallskip\noindent
{\bf Example 2.} Let $u=u'=(+1,+1,-1,-1)$, $v=(+1,+1,+1,+1)$, and $v'=(-1,-1,-1,-1)$. Choosing rows in the order 1 and 2, we obtain
\[\begin{array}{c||c|c|c|c||c}
& +&+&-&-&\\ \hline\hline
+&-&&+&&-\\ \hline
+&&-&&+&-\\ \hline
+&&&&&-\\ \hline
+&&&&&-\\ \hline\hline
&+&+&-&-&\end{array}.\]
Note that for the alternating condition, nonzeros are not required in any of the rows but are needed to obtain the alternating property of the columns.

\smallskip\noindent
{\bf Example 3.} Let $m=5$ and $n=16$, and let \\\\
\[u=u'=(-1,-1,+1,+1,+1,-1,-1,-1,+1,+1,+1,+1,-1,-1,-1,-1)\], 
\[v=(-1,+1,-1,-1,+1) \mbox{ and }v'=(-1,-1,-1,-1,+1).\]
Choosing rows in order 1,3,5,4,2, the algorithm produces
\[\begin{array}{c||c|c|c|c|c|c|c|c|c|c|c|c|c|c|c|c||c}
&-&-&+&+&+&-&-&-&+&+&+&+&-&-&-&-&\\ \hline\hline
-&+&&-&&&+&&&-&&&&+&&&&-\\ \hline
+&&&&&&&&&&&&-&&&&+&-\\ \hline
-&&+&&-&&&+&&&-&&&&+&&&-\\ \hline
-&&&&&&&&&&&&&&&+&&-\\ \hline
+&&&&&-&&&+&&&-&&&&&&+\\ \hline\hline
&-&-&+&+&+&-&-&-&+&+&+&+&-&-&-&-&\end{array}.\]

As in the examples, the algorithm always produces a $(u,u'|v,v')$-ASM with, except for possible zero rows and columns,  one nonzero in each column or one nonzero in each row.

Finally we give two examples to show that if we either did not choose the longest sequence satisfying our properties or did not use the first available property, then the algorithm may fail.
These are, respectively, 
\[\begin{array}{c||c|c|c|c||c}
&+&-&+&-&\\ \hline\hline
-&&&&&+\\ \hline
+&-&+&0&0&-\\ \hline
-&&&&&+\\ \hline
-&&&&&-\\ \hline\hline
&+&-&+&-&\end{array}\quad \mbox{ and }\quad
\begin{array}{c||c|c|c|c||c}
&+&-&+&-&\\ \hline\hline
+&&&&&-\\ \hline
+&-&0&0&+&-\\ \hline
+&&&&&-\\ \hline
+&&&&&-\\ \hline\hline
&+&-&+&-&\end{array}.\]
Neither of these can be completed to the required ASM.

\section{Concluding Remarks and Questions}

Let $u$ and $u'$ be $n$-vectors of $\pm 1$s and let $v$ and $v'$ be $m$-vectors of $\pm 1$s. 
Let ${\mathcal A}_{m,n}(u,u'|v,v')$ be the set of all $(u,u'|v,v')$-ASMs. In case $m=n$ and $u=u'=v=v'=(-1,-1,\ldots,-1)$, we get the set of all $n\times n$ classical ASMs and we abbreviate this notation to ${\mathcal A}_n$. Let
\[f(u,u'|v,v')=|{\mathcal A}_{m,n}(u,u'|v,v')|\]
where for brevity we write
\[f(n)=|{\mathcal A}_n|=\frac{1!4!7!\cdots (3n-2)!}{n!(n+1)!(n+2)!\cdots (2n-1)!}.\]

\smallskip\noindent
{\bf Question:} If $m=n$, is $f(u,u'|v,v')\le f(n)$ for all $u,u',v,v'$?

\smallskip\noindent
This seems likely but, if true, very difficult to prove. To prove it, one would probably have to establish an injection
\[{\mathcal A}_{m,n}(u,u'|v,v')\rightarrow {\mathcal A}_n.\]

  The following  special case is ``close to'' classical ASMs.
For $0\le k\le n$, let $\alpha_{n,k}= (+1,\ldots,+1,-1,\ldots,-1)$ where there are $k$\   $+1$s followed by $(n-k)$\  $-1$s.

\begin{theorem}\label{th:two} Let $0\le k\le n$, Then
\[{\mathcal A}(\alpha_{n,k},\alpha_{n,k}|\alpha_{n,k},\alpha_{n,k})=(-{\mathcal A}_k)\oplus {\mathcal A}_{n-k},\] and
\[f(\alpha_{n,k},\alpha_{n,k}|\alpha_{n,k},\alpha_{n,k})=f(k)f(n-k)\le f(n).\]
\end{theorem}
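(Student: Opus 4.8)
The plan is to prove the set identity first, from which the enumeration $f(\alpha_{n,k},\alpha_{n,k}|\alpha_{n,k},\alpha_{n,k})=f(k)f(n-k)$ follows by a bijection, and then to establish the inequality $f(k)f(n-k)\le f(n)$ separately by exhibiting an injection into $\mathcal{A}_n$. Writing $A=[a_{ij}]$ for a member of $\mathcal{A}(\alpha_{n,k},\alpha_{n,k}|\alpha_{n,k},\alpha_{n,k})$, I would partition $A$ into the four natural blocks determined by the split of indices into $\{1,\ldots,k\}$ and $\{k+1,\ldots,n\}$. Recall that columns $j\le k$ and rows $i\le k$ have border sign $+1$, so their first and last nonzero entries are $-1$, while columns $j>k$ and rows $i>k$ have border sign $-1$, so their first and last nonzero entries are $+1$. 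The entire argument rests on showing that the two off-diagonal blocks are zero; granting this, the diagonal blocks are immediately identified as $-\mathcal{A}_k$ and $\mathcal{A}_{n-k}$.

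The crux, and the step I expect to be the main obstacle, is proving the top-right block (rows $i\le k$, columns $j>k$) is identically zero. I would argue by a right-to-left induction on the column index $j$ running from $n$ down to $k+1$, using only two facts: since $u_j=-1$ the topmost nonzero of column $j$ must be $+1$, and since $v'_i=+1$ the rightmost nonzero in any row $i\le k$ must be $-1$. Suppose inductively that columns $j+1,\ldots,n$ have no nonzero entries in rows $\le k$. If column $j$ had a nonzero in some row $i\le k$, then its topmost such entry $a_{i_0 j}$ would be the topmost nonzero of the whole column, hence $+1$; but by the induction hypothesis nothing lies to its right in row $i_0$, so $a_{i_0 j}$ would also be the last nonzero of row $i_0$, hence $-1$, a contradiction. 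Thus column $j$ has zero top part, completing the induction (the base case $j=n$ is the same argument with a vacuous hypothesis). Transposing $A$, which again lies in $\mathcal{A}(\alpha_{n,k},\alpha_{n,k}|\alpha_{n,k},\alpha_{n,k})$ because $u=u'=v=v'$, carries the top-right block to the bottom-left block, so the bottom-left block vanishes as well.

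With both off-diagonal blocks zero, each column $j\le k$ of $A$ is supported on rows $\le k$, so the alternating condition with borders $+1,+1$ forces the top-left $k\times k$ block to be a $(\beta,\beta\,|\,\beta,\beta)$-ASM with $\beta=(+1,\ldots,+1)$; negating all entries turns this into a classical $k\times k$ ASM, so the top-left block lies in $-\mathcal{A}_k$, and symmetrically the bottom-right block lies in $\mathcal{A}_{n-k}$. Conversely, for any $B\in\mathcal{A}_k$ and $C\in\mathcal{A}_{n-k}$ one checks directly that $(-B)\oplus C$ satisfies all the border and alternation requirements, so it belongs to the set; this yields the claimed equality $\mathcal{A}(\alpha_{n,k},\alpha_{n,k}|\alpha_{n,k},\alpha_{n,k})=(-\mathcal{A}_k)\oplus\mathcal{A}_{n-k}$. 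Since $(B,C)\mapsto(-B)\oplus C$ is a bijection from $\mathcal{A}_k\times\mathcal{A}_{n-k}$ onto this set, counting gives $f(\alpha_{n,k},\alpha_{n,k}|\alpha_{n,k},\alpha_{n,k})=f(k)f(n-k)$.

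Finally, for the inequality I would produce an injection directly into $\mathcal{A}_n$, as the paper's remark suggests. The map $(B,C)\mapsto B\oplus C$, now with no sign change, sends a pair in $\mathcal{A}_k\times\mathcal{A}_{n-k}$ to a genuine $n\times n$ classical ASM, since each row and column of the block-diagonal matrix $B\oplus C$ is the corresponding row or column of $B$ or of $C$ padded with zeros and hence still alternates beginning and ending with $+1$. This map is injective because $B$ and $C$ are recovered as the diagonal blocks, so $f(k)f(n-k)=|\mathcal{A}_k\times\mathcal{A}_{n-k}|\le|\mathcal{A}_n|=f(n)$, completing the proof. The only genuinely delicate point is the vanishing of the off-diagonal blocks; the remaining verifications are routine checks of the alternation condition.
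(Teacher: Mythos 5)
Your proof is correct and follows essentially the same route as the paper's: show that the two off-diagonal blocks vanish, identify the diagonal blocks with $-\mathcal{A}_k$ and $\mathcal{A}_{n-k}$, and use the sign-flip injection $(-B)\oplus C\mapsto B\oplus C$ into $\mathcal{A}_n$ for the inequality. The only difference is that you spell out the column-by-column induction behind the vanishing of the off-diagonal blocks, a step the paper leaves as ``easy to verify.''
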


\begin{proof} 
First, by $(-{\mathcal A}_k)\oplus {\mathcal A}_{n-k}$ we mean the set of all matrices of the form $(-A_1)\oplus A_2$, where $A_1\in {\mathcal A}_k$ and $A_2\in {\mathcal A}_{n-k}$.  Let $A\in {\mathcal A}(\alpha_{n,k},\alpha_{n,k}|\alpha_{n,k},\alpha_{n,k})$. It is easy to verify that  the $k\times (n-k)$ 
submatrix $A[1,2,\ldots,k|k+1,\ldots,n]$ and $(n-k)\times k$ submatrix $A[k+1,\ldots,n|1,2,\ldots,k]$  of $A$, respectively, below the $-1$s in $u=\alpha_{n,k}$ and to the left of the $+1$s in $v'=\alpha_{n,k}$, and above the $+1$s in $u'=\alpha_{n,k}$ and to the right of the $-1$s in $v=\alpha_{n,k}$, are zero matrices. Thus $A=(-A_1)\oplus A_2$ where $A_1\in {\mathcal A}_k$ and $A_2\in \mathcal{A}_{n-k}$. Thus
$f(\alpha_{n,k},\alpha_{n,k}|\alpha_{n,k},\alpha_{n,k})=f(k)f(n-k)$. The mapping
\[A=(-A_1)\oplus A_2\rightarrow A_1\oplus A_2\]
is an injection from ${\mathcal A}(\alpha_{n,k},\alpha_{n,k}|\alpha_{n,k},\alpha_{n,k})$ into ${\mathcal A}_n$, completing the proof.
\end{proof}

Another special case perhaps deserving  special attention is the case where $n$ is odd and the $+1$s and $-1$s alternate:  $u=u'=v=v'=(+1,-1,+1,\ldots,-1,+1)$.
In this case, there exists a $(u,u'|v,v')$-ASM with no zeros. For instance, with $n=5$, we get
\[A=\begin{array}{c||c|c|c|c|c||c}
&+&-&+&-&+&\\ \hline\hline
+&-&+&-&+&-&+\\ \hline
-&+&-&+&-&+&-\\ \hline
+&-&+&-&+&-&+\\ \hline
-&+&-&+&-&+&-\\ \hline
+&-&+&-&+&-&+\\ \hline\hline
&+&-&+&-&+&\end{array}.\]
Other $(u,u'|v,v')$-ASMs can be obtained from $A$ by replacing $2\times 2$ 
($2l\times 2l$ in general) submatrices with consecutive rows and columns with zero matrices. For example,
\[
\begin{array}{c||c|c|c|c|c||c}
&+&-&+&-&+&\\ \hline\hline
+&-&+&-&+&-&+\\ \hline
-&+&-&0&0&+&-\\ \hline
+&-&+&0&0&-&+\\ \hline
-&+&-&+&-&+&-\\ \hline
+&-&+&-&+&-&+\\ \hline\hline
&+&-&+&-&+&\end{array}.\]

 But not every $(u,u'|v,v')$-ASM can be obtained this way. For instance, 
\[\begin{array}{c||c|c|c|c|c||c}
&+&-&+&-&+&\\ \hline\hline
+&-&+&-&&&+\\ \hline
-&+&-&&+&&-\\ \hline
+&-&&+&&-&+\\ \hline
-&+&&&&&-\\ \hline
+&-&+&-&&&+\\ \hline\hline
&+&-&+&-&+&\end{array}\]
does not result from $A$ by replacing nonzeros with zeros.

In addition to the number of $(u,u'|v,v')$-ASMs, it is also of interest to determine the minimum and maximum number of nonzeros in an $(u,u'|v,v')$-ASM, For classical ASMs, these numbers are, respectively, $n$ (the permutation matrices) and $n^2/2$ ($n$ even) and $(n^2+1)/2$ ($n$ odd)
(the so-called diamond ASMs). If $u=u'=v=v'$  with the $+1$s and $-1$s alternating, then there is a  $(u,u'|v,v')$-ASM with $n$ nonzeros in a permutation set of places and $n$ is clearly the minimum number of nonzeros; the maximum number is $n^2$ if $n$ is odd and $n^2-n$ if $n$ is even.

Finally, we make the following observation.
We might allow the vectors $u,u',v, v'$ to have some zero coordinates (so no restriction on the signs of the first and/or last nonzero entries in 
certain rows and columns). To determine whether or not a $(u,u'|v,v')$-ASM exists under these circumstances, one need only  determine how to change the 0s to $\pm1$ so that conditions
(\ref{eq:nec1}), (\ref{eq:nec2}), and (\ref{eq:nec3}) hold. But this seems messy to formulate and thus not very useful.


\begin{thebibliography}{99}
\bibitem{DB}  D.~Bressoud, {\it Proofs and Confirmations: The story of the Alternating Sign Conjecture}, Math. Association of America, Cambridge University Press, Cambridge, 1999.


  \bibitem{GK} G.~Kuperberg, Another proof of the alternating sign matrix conjecture, {\it Inter. Math. Res. Notes}, 1996 (1996), 139--150.
  
  \bibitem{MRR} W.H.~Mills, D.P.~Robbins, and H.~Rumsey, Alternating-sign matrices and descending plane partitions, {\it J.~Combin. Theory, Ser. A} 34 (1983), 340--359.
  
  
  \bibitem{DPR} D.P.~Robbins, The story of $1,2,7,42,429,7436,\ldots$, {\it Math. Intelligencer}, 13 (1991), 12--19.
  
\bibitem{DZ} D.~Zeilberger, Proof of the alternating sign matrix conjecture, {\it Electr. J. Combin.},  3 (1996), R13.
  



\end{thebibliography}
\end{document}